\documentclass[12pt,a4paper,reqno,twoside]{amsart}

\usepackage[english]{babel}
\usepackage[symbol*,ragged]{footmisc}
\usepackage{geometry}
\usepackage{amsmath}
\usepackage{amssymb}
\usepackage{mathrsfs}
\usepackage{float}
\usepackage{booktabs}
\usepackage{xcolor}
\usepackage[colorlinks,hypertexnames=false,linkcolor=red,anchorcolor=blue,citecolor=blue,urlcolor=blue]{hyperref}

\allowdisplaybreaks[4]
\renewcommand{\footnoterule}{
  \kern -3pt
  \hrule width 2.5in height 0.4pt
  \kern 3pt
}

\theoremstyle{plain}
\newtheorem{theorem}{\normalfont\scshape Theorem}[section]
\newtheorem{lemma}[theorem]{\normalfont\scshape Lemma}

\numberwithin{equation}{section}
\addtocounter{footnote}{1}

\makeatletter
\@ifundefined{MakeUppercase}{}{}
\makeatother

\begin{document}

\title[On Waring--Goldbach problem for two squares and higher powers]
      {On Waring--Goldbach problem for two squares, three biquadratics, one quintic and one septic}

\author[Linji Long, Jinjiang Li, Min Zhang]
       {Linji Long \quad \& \quad Jinjiang Li \quad \& \quad Min Zhang}

\address{[Linji Long] Department of Mathematics, China University of Mining and Technology,
         Beijing 100083, People's Republic of China}

\email{\textcolor{black}{linji.long.math@gmail.com}}

\address{[Jinjiang Li] (Corresponding author) Department of Mathematics, China University of Mining and Technology,
         Beijing 100083, People's Republic of China}

\email{\textcolor{black}{jinjiang.li.math@gmail.com}}

\address{[Min Zhang] School of Applied Science, Beijing Information Science and Technology University,
		 Beijing 100192, People's Republic of China  }

\email{\textcolor{black}{min.zhang.math@gmail.com}}

\date{}

\footnotetext[1]{Jinjiang Li is the corresponding author. \\
  \quad\,\,
  {\textbf{Keywords}}: Waring--Goldbach problem; circle method; additive representations\\

  \quad\,\,
  {\textbf{MR(2020) Subject Classification}}: 11P05, 11P32, 11P55
}

\begin{abstract}
In this paper, it is proved that every sufficiently large odd integer $n$ can be
represented as
\begin{equation*}
n=p_1^2+p_2^2+p_3^4+p_4^4+p_5^4+p_6^5+p_7^7,
\end{equation*}
where $p_1,\dots,p_7$ are prime variables. This result constitutes an enhancement upon a previous result of
Andrade \cite{Andrade2026}.
\end{abstract}
\maketitle

\section{Introduction}

Motivated by the general philosophy underlying the Hardy--Littlewood method, it is
conjectured that whenever $s$ and $k$ are positive integers with
$s\geqslant k+1$, every sufficiently large natural number $n$ satisfying the necessary
congruence conditions can be represented as the sum of $s$ $k$--th powers of primes.
In 1938, Hua \cite{Hua1938} proved that every sufficiently large integer
$n\equiv 5\pmod{24}$ is a sum of five squares of primes, while every sufficiently large
integer $n\equiv 17\pmod{240}$ is a sum of seventeen biquadrates of primes. This
result was subsequently improved by Davenport \cite{Davenport1939}, Kawada and
Wooley \cite{KawadaWooley2001}, and Zhao \cite{Zhao2014}, successively and respectively.

In 1981, Hooley \cite{Hooley1981} introduced divisor sum techniques into Waring's
problem with mixed powers and obtained an asymptotic formula for the number of
representations of every sufficiently large integer $n$ in the form
\begin{equation*}
n=x_1^2+x_2^2+x_3^4+x_4^4+x_5^4+x_6^4+x_7^4,
\end{equation*}
with all variables positive integers. Motivated by Hooley's result, it is reasonable to
conjecture that every sufficiently large integer $n$ satisfying some necessary
local conditions can be represented as
\begin{equation}\label{eq:conjecture-2244444}
	n=p_1^2+p_2^2+p_3^4+p_4^4+p_5^4+p_6^4+p_7^4,
\end{equation}
where $p_1,\dots,p_7$ are primes.  In 2016, Cai
\cite{Cai2016} obtained an almost--prime type approximation to the conjecture \ref{eq:conjecture-2244444} and proved that every sufficiently large integer
$n\equiv 7$ or $103\pmod{240}$ can be written as
\begin{equation*}
n=x^2+p_2^2+p_3^4+p_4^4+p_5^4+p_6^4+p_7^4,
\end{equation*}
where $p_2,\ldots,p_7$ are primes and $x$ has at most four prime factors, counted with
multiplicity. Recently, in 2024, Li--Long--Zhang \cite{LiLongZhang2024} subsequently accomplished the proof of the conjecture (\ref{eq:conjecture-2244444}) by combining the Hardy--Littlewood circle method with iterative exceptional set estimates.

In 1987, Br\"udern \cite{Brudern1987I,Brudern1987II} enhanced Hooley's work
\cite{Hooley1981} to sparser mixed power
representations. More precisely, he proved that every sufficiently large integer $n$ has a
representation in non-negative integers of the form
\begin{equation*}
n=x_1^2+x_2^2+x_3^4+x_4^4+x_5^4+x_6^5+x_7^6.
\end{equation*}
In 2026, Andrade \cite{Andrade2026} established the corresponding result with all the variables being primes. To be specific, Andrade \cite{Andrade2026} proved that every sufficiently large odd integer $n$ not divisible by $3$ can be represented as
\begin{equation*}
n=p_1^2+p_2^2+p_3^4+p_4^4+p_5^4+p_6^5+p_7^6,
\end{equation*}
where $p_1,\ldots,p_7$ are primes.

In the present paper, according to the result of Br\"udern \cite{Brudern1987I,Brudern1987II}, we shall continue enhancing Andrade's result \cite{Andrade2026} and consider sparser mixed power representations by replacing the sextic prime variable with a septic prime variable. Our main result is as follows.

\begin{theorem}\label{thm:waring-goldbach-2244457}
Every sufficiently large odd integer $n$ can be represented as
\begin{equation}\label{eq:target-representation}
n=p_1^2+p_2^2+p_3^4+p_4^4+p_5^4+p_6^5+p_7^7,
\end{equation}
where $p_1,\ldots,p_7$ are primes.
\end{theorem}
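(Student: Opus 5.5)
We prove Theorem \ref{thm:waring-goldbach-2244457} with the Hardy--Littlewood circle method, using weighted prime exponential sums. Set $P_k=n^{1/k}$ and
\begin{equation*}
f_k(\alpha)=\sum_{P_k/2<p\leqslant P_k}(\log p)\,e(p^k\alpha),
\end{equation*}
and let $R(n)=\int_0^1 f_2^2 f_4^3 f_5 f_7\, e(-n\alpha)\,d\alpha$ be the weighted number of representations. The total exponent is
\begin{equation*}
\tfrac12+\tfrac12+\tfrac34+\tfrac15+\tfrac17=\tfrac{297}{140}>2,
\end{equation*}
so we expect $R(n)\asymp n^{157/140}$, and the goal is $R(n)\gg n^{157/140}$. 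We take major arcs $\mathfrak M$ of level $Q=n^{\delta}$ for a small fixed $\delta>0$, i.e.\ $|\alpha-a/q|\leqslant Q/(qn)$ with $q\leqslant Q$, and let $\mathfrak m$ be the complementary minor arcs.

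\textbf{Major arcs.} On $\mathfrak M$ we apply the Siegel--Walfisz theorem to each $f_k$. This gives
\begin{equation*}
\int_{\mathfrak M}=\mathfrak S(n)\,J(n)+O\!\left(n^{157/140}(\log n)^{-A}\right),
\end{equation*}
where the singular integral satisfies $J(n)\asymp n^{157/140}$. For the singular series we need $\mathfrak S(n)\gg1$ for all odd $n$. The local factors are products of complete Gauss-type sums over reduced residues, and the series converges absolutely because seven variables suffice.

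The local densities need a direct check at the small primes.
\begin{itemize}
\item At $p=2$: every odd prime power is odd, so the sum of seven odd terms is odd, which matches the oddness of $n$.
\item At $p=3$: we need to solve $n\equiv x_1^2+\dots+x_7^7\pmod{3^\gamma}$ with units $x_i$. Squares and biquadrates of units are $\equiv1\pmod 3$. Fifth and seventh powers of units are $\pm1$, which is what removes Andrade's condition $3\nmid n$. With five terms equal to $1$ and two terms equal to $\pm1$, the sum modulo $3$ is $5\pm1\pm1\in\{1,0,2\}$, so every class mod $3$ is attained. We then lift by Hensel's lemma using a variable of exponent prime to $3$.
\item At $p=5$ and $p=7$: we check solubility in the same way, counting with Hensel lifting.
\end{itemize}
For large $p$, Weil-type bounds for the Gauss sums make the local factors equal to $1+O(p^{-1-\epsilon})$.

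\textbf{Minor arcs.} This is the main obstacle. We need
\begin{equation*}
\int_{\mathfrak m}|f_2^2f_4^3f_5f_7|\ll n^{157/140}(\log n)^{-1}.
\end{equation*}
The plan is to combine a mean value with a pointwise bound:
\begin{equation*}
\int_{\mathfrak m}\ll \sup_{\mathfrak m}|f_7|\cdot \left(\int_0^1|f_2^2f_4^2|^2\right)^{1/2}\left(\int_0^1|f_4^4f_5^2|\right)^{1/2},
\end{equation*}
or a similar split in which some variables carry the pointwise saving. The mean values needed are of two kinds.
\begin{itemize}
\item Hua/Br\"udern-type mean values for mixed squares and biquadrates. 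We use $\int|f_2^2f_4^4|\ll P_2^2P_4^{2+\epsilon}$, which follows from counting solutions of $x_1^2-x_2^2=y_1^4+\dots-y_4^4$ via divisor estimates as in Hooley and Br\"udern.
\item Mean values involving $f_5$ and $f_7$, via Kumchev-type estimates for $f_k$ on minor arcs, with saving $P_k^{1-\sigma_k+\epsilon}$.
\end{itemize}
The exponent bookkeeping is very tight, since the excess over $2$ is only $17/140$. Because of this I expect to need Br\"udern's refined mean value
\begin{equation*}
\int_0^1|f_2^2f_4^2f_5^{?}|\ \text{(or with } f_4^3\text{)}
\end{equation*}
with a power saving relative to the trivial bound, taken from the prime analogue of Br\"udern's argument or from Andrade's lemmas. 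This would be combined with the Kumchev/Zhao minor-arc bound $f_4(\alpha)\ll P_4^{7/8+\epsilon}$ and the standard saving of $(\log n)^{-A}$ for $f_2$ off the major arcs.

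If a direct Hölder split does not close, we fall back on a pruning argument. The minor arcs are split further into intermediate arcs, where we use major-arc approximations of $f_5$ and $f_7$ against the full mean value of the remaining variables, and genuine minor arcs, where the Weyl-type saving for $f_2$ closes the estimate. The singular series and major arcs are routine; the work lies in fitting the minor-arc exponents below $157/140$.
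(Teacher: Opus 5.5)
\textbf{There is a genuine gap, and it is in the minor arcs, which are the whole difficulty.} There you give a list of possible tools rather than an estimate, and the one concrete split fails by a power of $n$.

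A bookkeeping correction first: $\tfrac12+\tfrac12+\tfrac34+\tfrac15+\tfrac17=\tfrac{293}{140}$, so the target is $n^{153/140}$, not $n^{157/140}$.

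Your displayed split does not reproduce the integrand, since it yields $f_4^4$. The corrected Cauchy--Schwarz bound $\sup|f_7|\,(\int_0^1|f_2|^4|f_4|^4)^{1/2}(\int_0^1|f_4f_5|^2)^{1/2}$ is at least $n^{49/40}\sup|f_7|$, because $\int_0^1|f_2|^4|f_4|^4\gg n^2$ and $\int_0^1|f_4f_5|^2\gg n^{9/20}$. Already $n^{49/40}$ exceeds $n^{153/140}$, even if $f_7$ were bounded. A $(\log n)^{-A}$ saving for $f_2$ cannot repair a power loss, and the ``refined Br\"udern mean value'' is never specified.

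Worse, keep $p_6$ in the full range $P_5/2<p\le P_5$ and take the split that does work in the paper, $\int|F|\le J_1^{1/4}J_2^{3/4}\sup|f_7|^{1/2}$, with $J_1=\int_0^1|f_2f_7|^2|f_5|^4$ and $J_2=\int_0^1|f_2|^2|f_4|^4\ll n^{1+\varepsilon}$. It would need $J_1\ll n^{38/35+\varepsilon}$. That in turn needs the number $T$ of solutions of $y_1^7-y_2^7=z_1^5+z_2^5-z_3^5-z_4^5$ to be $\ll n^{41/70+\varepsilon}$. Vaughan's lemma for $x^7+m=y^7+\ell$ at $\lambda=1$ (so $\nu=1$) gives only about $n^{13/20}$.

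\textbf{The missing idea is to thin the fifth-power variable,} which costs nothing for an existence theorem. The paper takes $p_6\le Y=P_5^{\theta}$ with $\theta=230/259$, so the main term has size $n^{\Theta}$ with $\Theta=\tfrac34+\tfrac{\theta}{5}+\tfrac17$.
\begin{itemize}
\item Then $\nu=7\theta-6=8/37<1/4$, and $\theta$ exactly balances the last term in Vaughan's lemma, with $R(m)$ counting representations $m=z_1^5+z_2^5$, $R(m)\ll m^{\varepsilon}$.
\item This gives $T\ll P_7Y^2n^{\varepsilon}$. Since $P_7Y^2=n^{129/259}<P_2$, the diagonal dominates and $J_1\ll P_2^{1+\varepsilon}P_7Y^2$.
\item The H\"older split above then handles the set where $|f_7|\le P_7^{t}$, for any $t<147/148$.
\item Kumchev--Wooley gives $f_7\ll P_7^{125/126+\varepsilon}$ unless $\alpha$ lies in an arc with $q\le n^{1/440}$ and $|q\alpha-a|\le n^{1/440-1}$. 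Since $125/126<147/148$, the complementary set lies in these arcs.
\item On these arcs, Kumchev's bound for $f_2,f_4,f_7$ gives $|F|\ll n^{\Theta+1}L^{C}(q+n|q\alpha-a|)^{-5/2}$. Summing over $q>L^B$ or $|\alpha-a/q|>L^B/n$ yields $n^{\Theta}L^{-1}$.
\end{itemize}
This pruning is also what your major arcs need: Siegel--Walfisz only covers $q\le(\log n)^B$, not $q\le n^{\delta}$.
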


\section{Notation and Preliminaries}
\label{sec:notation}

Throughout this paper, $p$ denotes a prime number, $e(z)$ abbreviates
$\mathrm e^{2\pi iz}$, and $\varphi(q)$ is Euler's totient function. The implicit constants in
Vinogradov's and Landau's symbols will depend on the value assigned to $\varepsilon$.
Let $n$ be a sufficiently large odd integer, and write
\begin{equation*}
P_k=n^{1/k},\qquad L=\log n.
\end{equation*}
For $k=2,4,7$, define
\begin{equation}\label{eq:prime-weyl-sums}
f_k(\alpha)=
\sum_{P_k/2<p\leqslant P_k}(\log p)e(\alpha p^k).
\end{equation}
Moreover, set $\theta=230/259$ and $Y=P_5^\theta$, and introduce
\begin{equation}\label{eq:short-fifth-sum}
f_5(\alpha)=\sum_{1<p\leqslant Y}e(\alpha p^5)\log p.
\end{equation}
For simplicity, we often write $f_k$ for $f_k(\alpha)$, and define
\begin{equation}\label{eq:generating-function}
F(\alpha)=f_2^2(\alpha)f_4^3(\alpha)f_5(\alpha)
f_7(\alpha).
\end{equation}
By orthogonality, the integral
\begin{equation}\label{eq:representation-integral}
\nu(n)=\int_0^1F(\alpha)e(-\alpha n)\,\mathrm d\alpha
\end{equation}
counts the number of solutions of the equation (\ref{eq:target-representation}) in prime
variables constrained into certain intervals and weighted by $(\log p_1)\dots(\log p_7)$.
Let
\begin{equation}\label{eq:main-term-exponent}
\Theta=\frac34+\frac\theta5+\frac17=\frac{1109}{1036}.
\end{equation}
Our goal is to demonstrate that the lower bound
\begin{equation}\label{eq:weighted-lower-bound}
\nu(n)\gg n^\Theta
\end{equation}
holds for all sufficiently large odd integers $n$. This is a quantitative form of the theorem.

For a fixed sufficiently large real number $B\geqslant1$, let $\mathfrak M$ be the union of all intervals
\begin{equation*}
\mathfrak M(q,a)=
\big\{\alpha\in[0,1]:|\alpha-a/q|\leqslant L^B/n\big\},
\end{equation*}
with $1\leqslant a\leqslant q$, $(a,q)=1$ and $1\leqslant q\leqslant L^B$. Let
$\mathfrak m=[0,1]\setminus\mathfrak M$. Theorem~\ref{thm:waring-goldbach-2244457}
will be proved once we establish the bounds
\begin{equation}\label{eq:circle-method-targets}
\int_{\mathfrak M}F(\alpha)e(-\alpha n)\,\mathrm d\alpha\gg n^\Theta
\quad \textrm{and}\quad
\int_{\mathfrak m}|F(\alpha)|\,\mathrm d\alpha\ll n^\Theta L^{-1}
\end{equation}
for every sufficiently large odd integer $n$. The contribution from $\mathfrak M$
proceeds along a well trodden path. An outline is given below.

\section{Major arcs asymptotics}\label{sec:major-arcs}

For $k=2,4,5,7$, we write
\begin{equation*}
S_k(q,a)=\sum_{\substack{1\leqslant x\leqslant q\\(x,q)=1}}
e\bigg(\frac{ax^k}{q}\bigg),
\qquad
v_k(\beta)=\frac1k\sum_{2^{-k}n<m\leqslant n}m^{1/k-1}e(\beta m)
\quad (k=2,4,7).
\end{equation*}
Proceeding in the manner of Hua's proof of Lemma 6 in \cite{Hua1938}, we arrive at the following result.

\begin{lemma}
\label{lem:major-arc-approximation}
Let $a,q$ be integers satisfying $0\leqslant a\leqslant q$, $(a,q)=1$ and $1\leqslant q\leqslant L^B$. Let $k=2,4, 7$. Then, for $\alpha\in\mathfrak M(q,a)$,
\begin{equation*}
f_k(\alpha)=\frac{S_k(q,a)}{\phi(q)}v_k(\alpha-a/q)
+O(P_ke^{-c_1\sqrt L}),
\end{equation*}
where $c_1>0$ is a constant depending at most on $B$. Moreover,
\begin{equation*}
f_5(a/q)=\frac{S_5(q,a)}{\phi(q)}P_5^\theta
+O(P_5^\theta e^{-c_2\sqrt L}),
\end{equation*}
where $c_2>0$ is a constant depending at most on $B$.
\end{lemma}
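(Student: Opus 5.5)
We argue as in Hua's treatment, reducing to the Siegel--Walfisz theorem by sorting primes into residue classes modulo $q$. Fix $k\in\{2,4,7\}$ and write $\alpha=a/q+\beta$ with $|\beta|\leqslant L^B/n$. Since $q\leqslant L^B<P_k/2$, no prime in $(P_k/2,P_k]$ divides $q$. Splitting by the residue $x$ of $p$ modulo $q$ gives
\begin{equation*}
f_k(\alpha)=\sum_{\substack{1\leqslant x\leqslant q\\(x,q)=1}}e\bigg(\frac{ax^k}{q}\bigg)\sum_{\substack{P_k/2<p\leqslant P_k\\ p\equiv x\!\!\pmod q}}(\log p)\,e(\beta p^k).
\end{equation*}
For the inner sum, set $\theta(t;q,x)=\sum_{p\leqslant t,\,p\equiv x\,(q)}\log p$. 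By Siegel--Walfisz, valid since $q\leqslant L^B$ and with a constant depending on $B$,
\begin{equation*}
\theta(t;q,x)=\frac{t}{\phi(q)}+E(t), \qquad E(t)\ll P_k e^{-c\sqrt L}\quad (t\leqslant P_k).
\end{equation*}
We then apply partial summation against the weight $e(\beta t^k)$. The main term becomes $\phi(q)^{-1}\int_{P_k/2}^{P_k}e(\beta t^k)\,dt$. The error is bounded by
\begin{equation*}
\Big(1+\int_{P_k/2}^{P_k}|\beta|\,kt^{k-1}\,dt\Big)\max_t|E(t)|\ll (1+|\beta| n)\,P_k e^{-c\sqrt L}\ll L^B P_k e^{-c\sqrt L}.
\end{equation*}
Summing over the at most $q\leqslant L^B$ residues $x$, and using $|e(ax^k/q)|=1$, the total error is $\ll L^{2B}P_ke^{-c\sqrt L}$. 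This is absorbed into $P_ke^{-c_1\sqrt L}$ for any $c_1<c$.

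It remains to identify the integral with $v_k(\beta)$. Substituting $m=t^k$ turns it into $\frac1k\int_{2^{-k}n}^{n}m^{1/k-1}e(\beta m)\,dm$. Euler--Maclaurin (or partial summation) compares this integral with the sum defining $v_k(\beta)$. Over the range, the derivative of $m^{1/k-1}e(\beta m)$ has size $\ll m^{1/k-2}+|\beta|m^{1/k-1}$. Integrated over $[2^{-k}n,n]$ this gives $\ll 1+L^BP_k/n$, and the boundary terms are $O(1)$. Both are negligible against $P_ke^{-c_1\sqrt L}$. Multiplying by $S_k(q,a)/\phi(q)$, which has modulus at most $1$, completes the first assertion.

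For $f_5(a/q)$ the argument is simpler because $\beta=0$. Primes $p\mid q$ contribute $\ll\log q\ll L$, which is negligible. The remaining primes split into residue classes $x$ coprime to $q$, giving
\begin{equation*}
f_5(a/q)=\sum_{(x,q)=1}e\bigg(\frac{ax^5}{q}\bigg)\,\theta(Y;q,x)+O(L).
\end{equation*}
Since $q\leqslant L^B\leqslant(\log Y)^{B'}$, Siegel--Walfisz applies at the height $Y=P_5^\theta$. It gives $\theta(Y;q,x)=Y/\phi(q)+O(Ye^{-c\sqrt{\log Y}})$. Because $\log Y\asymp L$, the error is $O(P_5^\theta e^{-c_2\sqrt L})$, and summing over $x$ costs at most a factor $L^B$.

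The main point requiring care is that the Siegel--Walfisz constant is ineffective but depends only on $B$. One must also check that the partial summation loss $1+|\beta|n\leqslant 1+L^B$ stays polylogarithmic, so that it is swallowed by the saving $e^{-c\sqrt L}$. This is exactly why the major arcs are restricted to width $L^B/n$ and moduli $q\leqslant L^B$.
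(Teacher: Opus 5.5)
Your proposal is correct and follows the same route as the paper. The paper gives no details here and just defers to Hua's proof of his Lemma 6, which is exactly your argument: split into reduced residue classes modulo $q$, apply Siegel--Walfisz, use partial summation at polylogarithmic cost $1+|\beta|n\leqslant 1+L^B$, and compare the resulting integral with the sum $v_k(\beta)$ (at $\beta=0$ and height $Y=P_5^\theta$ for $f_5$).
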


\begin{lemma}\label{lem:major-arc-lower-bound}
For every sufficiently large odd integer $n$, one has
\begin{equation*}
\int_{\mathfrak M}F(\alpha)e(-\alpha n)\,\mathrm d\alpha
\gg n^{\Theta}.
\end{equation*}
\end{lemma}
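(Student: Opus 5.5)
On each $\mathfrak M(q,a)$ I will replace every factor of $F$ by its approximant from Lemma~\ref{lem:major-arc-approximation}. The awkward factor is the short fifth-power sum $f_5$, since the lemma only gives its value at $a/q$.

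\emph{Step 1: handling $f_5$.} Write $\alpha=a/q+\beta$ with $|\beta|\leqslant L^B/n$. Since $p\leqslant Y=P_5^\theta$, we have $|\beta p^5|\leqslant L^B Y^5/n=L^B n^{\theta-1}=o(1)$. Partial summation then gives
\[
f_5(\alpha)=f_5(a/q)+O\big(Y\,|\beta|Y^5\big)=f_5(a/q)+O(Y n^{\theta-1}L^B).
\]
So on $\mathfrak M(q,a)$,
\[
f_5(\alpha)=\frac{S_5(q,a)}{\varphi(q)}Y+O(Ye^{-c_2\sqrt L}).
\]
For $k=2,4,7$ I use $f_k=\frac{S_k(q,a)}{\varphi(q)}v_k(\beta)+O(P_ke^{-c_1\sqrt L})$ directly.

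\emph{Step 2: error terms.} Expanding the product, each error term contains one factor of size $O(P_ke^{-c\sqrt L})$ or $O(Ye^{-c\sqrt L})$, and the other factors are bounded trivially by $P_j$, $Y$, or $|v_j|\ll P_j$. The measure of $\mathfrak M$ is $\ll L^{3B}/n$. Hence the total error is
\[
\ll P_2^2P_4^3YP_7\,e^{-c\sqrt L}L^{3B}n^{-1}=n^{\Theta}e^{-c'\sqrt L}.
\]
The main term is
\[
Y\sum_{q\leqslant L^B}A(q,n)\int_{|\beta|\leqslant L^B/n}v_2^2v_4^3v_7(\beta)e(-\beta n)\,d\beta,
\qquad
A(q,n)=\sum_{(a,q)=1}\frac{S_2^2S_4^3S_5S_7(q,a)}{\varphi(q)^7}e(-an/q).
\]

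\emph{Step 3: the singular integral.} I extend the $\beta$-integral to $[-1/2,1/2]$. This uses $v_k(\beta)\ll P_k(1+n|\beta|)^{-1/k}$, so the tail is $\ll P_2^2P_4^3P_7n^{-1}(L^B)^{-(1+3/4+1/7-1)}$, which is negligible. The completed integral counts $m_1+\dots+m_6=n$ with weights $\prod_k \frac1k m^{1/k-1}$, and is therefore $\asymp n^{1+3/4+1/7-1}$. Multiplying by $Y=n^{\theta/5}$ gives exactly $n^\Theta$.

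\emph{Step 4: the singular series (main obstacle).} I need $\mathfrak S(n)=\sum_{q}A(q,n)\gg1$ uniformly, together with a tail $\sum_{q>L^B}|A(q,n)|\ll L^{-\delta}$.

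For convergence, use $S_k(q,a)\ll q^{1/2+\varepsilon}$ for prime-power moduli, or Hua's bound $\varphi(q)^{-1}S_k\ll q^{-1/2+\varepsilon}$ for squarefree $q$ with the usual prime-power analysis. Seven factors give $A(q,n)\ll q^{1-7/2+\varepsilon}$, which converges absolutely.

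Since $A$ is multiplicative, $\mathfrak S(n)=\prod_p\chi_p(n)$ with $\chi_p=\sum_{t}A(p^t,n)$. For large $p$, $|A(p,n)|\ll p^{-5/2+\varepsilon}$ and $A(p^t,n)=0$ for $t$ beyond a bounded exponent. Hence $\prod_{p>p_0}\chi_p\gg1$.

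For the finitely many small $p$, $\chi_p(n)$ is a normalized count of solutions of $x_1^2+x_2^2+x_3^4+x_4^4+x_5^4+x_6^5+x_7^7\equiv n\pmod{p^\gamma}$ in units $x_i$. Positivity then reduces to exhibiting one nonsingular unit solution, lifted by Hensel's lemma:

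\begin{itemize}
\item For $p=2$: odd $x$ give $x^2\equiv1\pmod 8$ and $x^4\equiv1\pmod{16}$, so the sum is $\equiv 7\pmod 8$ only up to the variable odd values of $x^5$ and $x^7$. These range over all odd residues, and this uses exactly that $n$ is odd. I will check carefully that oddness alone suffices modulo $2^\gamma$, using the quintic and septic variables, whose power maps are bijective on units mod $2^\gamma$.
\item For $p=3,5,7$ and the other small primes: since $(5,\varphi(3^\gamma))=(7,\varphi(3^\gamma))=1$, the fifth- and septic-power maps are bijections on units mod $3^\gamma$, so a solution exists. At $p=5$ I use the septic variable, which is bijective mod $5^\gamma$; at $p=7$ the quintic variable, bijective mod $7^\gamma$.
\end{itemize}

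This is the step needing the most care, and it explains why no condition modulo $3$ appears, unlike Andrade's sextic case.
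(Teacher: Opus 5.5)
Your analytic part follows the paper's proof. That covers the Taylor expansion of the short sum $f_5$ on $\mathfrak M(q,a)$, the product of the approximants with total error $n^{\Theta}e^{-c'\sqrt L}$, and the completion of the singular integral. It also covers the bound $A(q,n)\ll q^{-5/2+\varepsilon}$, which gives absolute convergence and the tail estimate. These steps are fine. For the claim that the completed singular integral is $\asymp n^{25/28}$, you should still check, as the paper does, that the lower endpoints of the $m_i$-ranges add up to $\frac{89}{128}n<n$, so that $\gg n^5$ tuples are admissible. Your prime-power and Hensel set-up is heavier than needed. The factors $S_5,S_7$ vanish at $2^t$ for $t\geqslant 2$, and $S_2$ vanishes at $p^t$ for odd $p$ and $t\geqslant2$. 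Hence $A(q,n)=0$ unless $q$ is squarefree, $\chi_p=1+A(p,n)=p(p-1)^{-7}M_n(p)$, and everything reduces to solvability modulo $p$ with non-zero variables.

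The genuine gap is the positivity of $\chi_p(n)$ for the primes $11\leqslant p\leqslant p_0$.
\begin{itemize}
\item Your $p_0$ comes from $|A(p,n)|\ll p^{-5/2+\varepsilon}$ with an unspecified implied constant. It is therefore an unknown finite bound, and every prime below it needs its own argument.
\item Your bullet announces ``$p=3,5,7$ and the other small primes'' but only treats $3,5,7$.
\item The device you use there is that $x\mapsto x^5$ or $x\mapsto x^7$ permutes the units. It breaks down whenever $35\mid p-1$ (first at $p=71$, then $211,281,\dots$), where neither map is injective. So nothing in the proposal guarantees $\chi_{71}(n)>0$.
\end{itemize}
The paper settles all $p\geqslant 11$ at once by Cauchy--Davenport applied to $\mathscr A_k(p)=\{x^k:x\in\mathbb F_p^\times\}$. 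Since $|\mathscr A_k(p)|\geqslant (p-1)/k$, the sevenfold sumset has at least $\min\{p,\frac{293}{140}(p-1)-6\}=p$ elements, so $M_n(p)\geqslant 1$.

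Alternatively, you could make $p_0$ explicit. Weil's bound $|S_k(p,a)|\leqslant((k,p-1)-1)\sqrt p+1$ gives $|A(p,n)|\leqslant (p-1)^{-6}(\sqrt p+1)^2(3\sqrt p+1)^3(4\sqrt p+1)(6\sqrt p+1)$. The right side is decreasing in $p$ and is $<1$ for $p\geqslant 23$, so $\chi_p=1+A(p,n)>0$ there, with the product over $p\geqslant 23$ bounded below. For $p=11,13,17,19$ one has $(7,p-1)=1$ or $(5,p-1)=1$, so your bijectivity argument would then cover the remaining primes.
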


\begin{proof}
We begin by providing a suitable approximation for $f_5(\alpha)$. Suppose $\alpha\in\mathfrak M(q,a)$ with $q\leqslant L^B$ and $(a,q)=1$. For $p\leqslant P_5^\theta$ and $|\beta|\leqslant L^B/n$, Taylor expansion yields
that
\begin{equation*}
e(\beta p^5)=1+O(L^Bn^{-1}P_5^{5\theta}).
\end{equation*}
Taking $\beta=\alpha-a/q$ and noticing that $n^{-1}P_5^{5\theta}=n^{\theta-1}$, we conclude from the definition of $f_5$ that
\begin{equation*}
f_5(\alpha)=f_5(a/q)+O(P_5^\theta L^Bn^{\theta-1}).
\end{equation*}
Using Lemma~\ref{lem:major-arc-approximation}, we infer
\begin{equation}
f_5(\alpha)=\phi(q)^{-1}S_5(q,a)P_5^\theta
+O(P_5^\theta L^{-4B}).
\label{eq:major-fifth-approximation}
\end{equation}
We multiply the approximations for $f_k(\alpha)$ to estimate $F(\alpha)$. Define
\begin{equation*}
U(q,a)=S_2^2(q,a)S_4^3(q,a)S_5(q,a)S_7(q,a),
\qquad
w(\beta)=v_2^2(\beta)v_4^3(\beta)v_7(\beta).
\end{equation*}
By using (\ref{eq:major-fifth-approximation}) and Lemma~\ref{lem:major-arc-approximation}, we obtain
\begin{equation*}
F(\alpha)=P_5^\theta(\varphi(q))^{-7}U(q,a)w(\alpha-a/q)
+O(n^{\Theta+1}L^{-4B}).
\end{equation*}
We integrate this over $\mathfrak M$, a set of measure $O(L^{3B}/n)$, and yields that
\begin{equation}
\int_{\mathfrak M}F(\alpha)e(-\alpha n)\,\mathrm d\alpha
= P_5^\theta\sum_{q\leqslant L^B}\frac{A_n(q)}{\varphi^7(q)}
\int_{-L^B/n}^{L^B/n}w(\beta)e(-\beta n)\,\mathrm d\beta
+ O(n^{\Theta}L^{-B}),
\label{eq:major-arc-reduction}
\end{equation}
where
\begin{equation*}
A_n(q)=\sum_{\substack{1\leqslant a\leqslant q\\(a,q)=1}}U(q,a)e(-an/q).
\end{equation*}
By Lemma 5 of Hua \cite{Hua1938}, we have $U(q,a)\ll q^{7/2+\varepsilon}$ for coprime $a,q$, and hence $A_n(q)\ll q^{9/2+\varepsilon}$ uniformly in $n$. It follows that the series
\begin{equation*}
\mathfrak S(n)=\sum_{q=1}^{\infty}\frac{A_n(q)}{\varphi^7(q)}
\end{equation*}
converges absolutely. Furthermore, routine calculations show that
\begin{equation}\label{asymp-truncated}
\sum_{q\leqslant L^B}\frac{A_n(q)}{\phi(q)^7}
=\mathfrak S(n)+O(L^{-3B/2+\varepsilon})
\end{equation}
holds uniformly in $n$. Also, by Lemma 6.2 of Vaughan \cite{Vaughan1997}, one has
\begin{equation}
w(\beta)\ll n^{\Theta+1-\theta/5}(1+n|\beta|)^{-2}.
\label{eq:major-kernel-bound}
\end{equation}
Thus, we can replace the sum in (\ref{eq:major-arc-reduction}) by $\mathfrak S(n)$ with negligible errors. We aim to prove that $\mathfrak S(n)\gg1$ for all large odd $n$. Building on the argument used in the proof of Lemma 2.11
of Vaughan \cite{Vaughan1997}, we can likewise establish that $A_n(q)$ is multiplicative. Moreover, it follows from
Lemma 4 of Hua \cite{Hua1938} that $A_n(q)=0$ unless $q$ is square-free. This yields
\begin{equation}
\mathfrak S(n)=\prod_p\big(1+(p-1)^{-7}A_n(p)\big).
\label{eq:singular-series-product}
\end{equation}

Let $M_n(p)$ denote the number of solutions to
\begin{equation}
x_1^2+x_2^2+x_3^4+x_4^4+x_5^4+x_6^5+x_7^7=n
\label{eq:local-congruence}
\end{equation}
in the finite field $\mathbb F_p$, where all variables are non--zero. By orthogonality mod $p$, one has
\begin{equation*}
1+(p-1)^{-7}A_n(p)=p(p-1)^{-7}M_n(p).
\end{equation*}

For $p=2$, the only non--zero residue is $1$. Hence every term on the
left--hand side of (\ref{eq:local-congruence}) is congruent to $1$ modulo $2$, and therefore
\begin{equation*}
	x_1^2+x_2^2+x_3^4+x_4^4+x_5^4+x_6^5+x_7^7
	\equiv 7\equiv 1\pmod{2}.
\end{equation*}
Thus (\ref{eq:local-congruence}) is solvable modulo $2$ precisely when $n$ is odd.

For $p=3$, every non--zero residue $x$ satisfies
\begin{equation*}
	x^2\equiv x^4\equiv 1\pmod{3},
	\qquad
	x^5\equiv x^7\equiv x\pmod{3}.
\end{equation*}
Hence the first five terms contribute $5\equiv 2\pmod{3}$, while the last
two terms contribute $x_6+x_7$, where $x_6,x_7\in\{\pm1\}$. Thus the total
sum may be
\begin{equation*}
	2-2\equiv 0,
	\qquad
	2+2\equiv 1,
	\qquad
	2+0\equiv 2
	\pmod{3}.
\end{equation*}
Therefore every residue class modulo $3$ is represented.

For $p=5$, we have
\begin{equation*}
	x^5\equiv x,
	\qquad
	x^7\equiv x^3\pmod{5}
\end{equation*}
for every $x\neq 0$. We claim that $x_6^5+x_7^7$ assumes every residue class modulo $5$. Indeed, for any $r\in\mathbb{F}_5$ with $r\neq 1$, take
\begin{equation*}
	x_7=1,
	\qquad
	x_6=r-1,
\end{equation*}
so that $x_6,x_7\neq 0$ and
\begin{equation*}
	x_6^5+x_7^7
	\equiv
	(r-1)+1
	\equiv
	r
	\pmod{5}.
\end{equation*}
For $r=1$, take $x_6=3$ and $x_7=2$, then
\begin{equation*}
	x_6^5+x_7^7
	\equiv
	3+2^7
	\equiv
	3+3
	\equiv
	1
	\pmod{5}.
\end{equation*}
Thus the last two variables alone can supply any prescribed residue modulo $5$.

For $p=7$, Fermat's theorem gives
\begin{equation*}
	x^7\equiv x\pmod{7}.
\end{equation*}
Again, $x_6^5+x_7^7$ assumes every residue class modulo $7$. If $r\neq 1$, we take
\begin{equation*}
	x_6=1,
	\qquad
	x_7=r-1,
\end{equation*}
and obtain
\begin{equation*}
	x_6^5+x_7^7
	\equiv
	1+(r-1)
	\equiv
	r
	\pmod{7}.
\end{equation*}
For $r=1$, we take $x_6=2$ and $x_7=4$. Since
\begin{equation*}
	2^5\equiv 4\pmod{7},
\end{equation*}
we obtain
\begin{equation*}
	x_6^5+x_7^7
	\equiv
	4+4
	\equiv
	1
	\pmod{7}.
\end{equation*}
Hence every residue class modulo $7$ is represented.

It remains to consider $p\geqslant11$. Set
\begin{equation*}
\mathscr A_k(p)=\{x^k:x\in\mathbb F_p^\times\}.
\end{equation*}
Then $|\mathscr A_k(p)|=(p-1)/(k,p-1)$, and repeated application of the Cauchy--Davenport theorem gives
\begin{align*}
|\mathscr A_2+\mathscr A_2+\mathscr A_4+\mathscr A_4+\mathscr A_4+\mathscr A_5+\mathscr A_7|
\geqslant & \,\, \min\bigg\{p,
2\frac{p-1}{2}+3\frac{p-1}{4}
+\frac{p-1}{5}+\frac{p-1}{7}-6\bigg\}
\\
= & \,\, \min\bigg\{p,\frac{293}{140}(p-1)-6\bigg\}=p.
\end{align*}
Here the last equality follows from $p\geqslant11$. Hence (\ref{eq:local-congruence}) is solvable with all variables non-zero, and $M_n(p)\geqslant1$.

Hence the factors in (\ref{eq:singular-series-product}) are positive for every odd $n$, and the uniform convergence with respect to $n$ gives a number $p_0$ such that
\begin{equation*}
\mathfrak S(n)\geqslant\frac12\prod_{p\leqslant p_0}
\big(1+(p-1)^{-7}A_n(p)\big)
\geqslant\frac12\prod_{p\leqslant p_0}p^{-6}.
\end{equation*}
Thus, the lower bound $\mathfrak S(n)\gg1$ holds for all large odd $n$.

In order to conclude the proof, it suffices to show that the integral on the right--hand side of (\ref{eq:major-arc-reduction}) is $\gg n^{\Theta-\theta/5}$. Using the estimate (\ref{eq:major-kernel-bound}), we deduce that
\begin{equation}
\int_{-L^B/n}^{L^B/n}w(\beta)e(-\beta n)\,\mathrm d\beta
=  \int_{-1/2}^{1/2}w(\beta)e(-\beta n)\,\mathrm d\beta
+ O(n^{\Theta-\theta/5}L^{-B}).
\label{eq:singular-integral-extension}
\end{equation}
By orthogonality, the integral on the right--hand side of (\ref{eq:singular-integral-extension}) is
\begin{equation}
\frac1{2^2\cdot4^3\cdot7}
\sum(m_1m_2)^{-1/2}(m_3m_4m_5)^{-3/4}m_7^{-6/7},
\label{eq:singular-integral-sum}
\end{equation}
where the summation runs over $m_1,m_2,m_3,m_4,m_5,m_7$ subject to
\begin{equation*}
m_1+m_2+m_3+m_4+m_5+m_7=n,
\end{equation*}
with
\begin{equation*}
2^{-2}n<m_1,m_2\leqslant n,
\qquad
2^{-4}n<m_3,m_4,m_5\leqslant n,
\qquad
2^{-7}n<m_7\leqslant n.
\end{equation*}
If we restrict the free variables to
\begin{equation*}
\frac14n<m_1,m_2\leqslant\frac{33}{128}n,
\qquad
\frac1{16}n<m_3,m_4,m_5\leqslant\frac9{128}n,
\end{equation*}
then
\begin{equation*}
m_1+\cdots+m_5\leqslant\frac{93}{128}n,
\end{equation*}
and thus
\begin{equation*}
m_7=n-(m_1+\cdots+m_5)\geqslant\frac{35}{128}n>\frac1{128}n,
\end{equation*}
and also $m_7<n$. Every such choice therefore produces an admissible $m_7$. There are $\gg n^5$ choices of the five free variables, and the weight in (\ref{eq:singular-integral-sum}) is $\asymp n^{-1}n^{-9/4}n^{-6/7}$. Consequently, we deduce that
\begin{equation*}
\int_{-1/2}^{1/2}w(\beta)e(-\beta n)\,\mathrm d\beta
\gg n^{5-1-9/4-6/7}=n^{25/28}.
\end{equation*}
Since
\begin{equation*}
\Theta-\frac\theta5=\frac34+\frac17=\frac{25}{28},
\end{equation*}
the integral in (\ref{eq:singular-integral-extension}) is $\gg n^{\Theta-\theta/5}$, which combined with
$P_5^\theta=n^{\theta/5}$, (\ref{eq:major-arc-reduction}), (\ref{asymp-truncated}) and the property
$\mathfrak S(n)\gg1$ completes the proof of Lemma \ref{lem:major-arc-lower-bound}.
\end{proof}
\section{Minor arcs estimate}\label{sec:minor-arcs}
In this section, we shall concentrate on dealing with the contribution from the minor arcs. First, we list several lemmas which are necessary to bound the integral.

\begin{lemma}
\label{lem:vaughan-specialized}
Let $6/7\leqslant\lambda\leqslant1$, put $\nu=7\lambda-6$, and let $P$ be
sufficiently large. Let $R(m)$ be a non-negative integer-valued function
supported on $m\ll P^{7\lambda}$, and put $S=\sum_mR(m)^2$, and let
\begin{equation*}
T_R=\sum_{\substack{P<x,y\leqslant2P\\x^7+m=y^7+\ell}}R(m)R(\ell).
\end{equation*}
Then
\begin{equation*}
T_R\ll PS+P^{3/4+\nu}S +P^{1/2+7\nu/8+\varepsilon}S^{7/8} \bigg(\sum_mR(m)\bigg)^{1/4}.
\end{equation*}
\end{lemma}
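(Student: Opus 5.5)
This is the case $k=7$ of Vaughan's lemma (Vaughan, Acta Math.\ 1986, "On Waring's problem for smaller exponents", Lemma 3.1 in the $k$-th power form) on the mean value $T_R$. The plan is to rerun that proof with $k=7$ and $\nu=7\lambda-6$, or to quote the general statement directly.

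The proof will follow Hooley's and Vaughan's differencing. Write
\begin{equation*}
T_R=\sum_{m}R(m)^2\,\#\{P<x\leqslant 2P\}+T',
\end{equation*}
where the first term, coming from the diagonal $x=y$, gives $\ll PS$. In $T'$ we have $x\neq y$; put $h=y-x$, so that $m-\ell=y^7-x^7=h\,g(x,h)$ with $g$ of degree $6$ in $x$. Since $m,\ell\ll P^{7\lambda}$, we get $|h|\,P^6\ll P^{7\lambda}$, i.e.\ $1\leqslant |h|\ll H:=P^{\nu}$. By Cauchy--Schwarz it suffices to bound, for each $\ell$, the number of pairs $(x,h)$ with $\ell+h\,g(x,h)$ in the support, weighted by $R$. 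Writing this via exponential sums, we get
\begin{equation*}
T'=\int_0^1|K(\alpha)|^2\sum_{1\leqslant|h|\leqslant H}\ \sum_{x}e\big(\alpha h g(x,h)\big)\,\mathrm d\alpha,
\end{equation*}
where $K(\alpha)=\sum_m R(m)e(\alpha m)$.

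Next we dissect $[0,1]$ by Dirichlet approximation. On arcs with large denominator, apply Weyl differencing to the sum over $x$ (a degree $6$ polynomial). This is where the exponent $1/2+7\nu/8$ and the power $S^{7/8}(\sum R)^{1/4}$ arise: we interpolate by Hölder between $\int|K|^2=S$ and $\sup|K|\leqslant\sum_m R(m)$, weighting $S^{7/8}$ against $(\sum R)^{1/4}$, and combine this with the Weyl/Hua bound $P^{1-2^{1-k'}+\varepsilon}$ applied to the inner sum. Averaging over $h$ with the divisor-type bound for the number of representations $m-\ell=h g$ supplies the $P^\varepsilon$. On arcs with small denominator $q$, use the standard major-arc approximation $\sum_x e(\alpha hg)\ll q^{-1/6}P(1+P^{7}H|\beta|)^{-1/6}$. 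Integrating this against $|K|^2$ and using $\int|K|^2=S$ gives the term $P^{3/4+\nu}S$: summing $H$ values of $h$ with a saving of $P^{1/4}$ from the arithmetic average over $q$, in the manner of Vaughan's $k$-th power argument.

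The main obstacle is the minor-arc step, namely extracting precisely $P^{1/2+7\nu/8}S^{7/8}(\sum R)^{1/4}$ rather than a weaker Weyl exponent. The first thing to try is the large-sieve/Hölder interpolation in Vaughan's Lemma 3.1, with the $8$th moment of the degree-$6$ difference sum averaged over $h$. If the constants do not match, I would simply cite Vaughan's general lemma with $k=7$. That lemma gives exactly $T_R\ll PS+P^{k-3+\nu+\ldots}$; specialising $k=7$ turns its exponents into $3/4+\nu$ and $1/2+7\nu/8$, so the lemma here is that general result written out for $k=7$.
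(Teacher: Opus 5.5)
The paper proves this lemma by a bare citation: it is Lemma~4 of Vaughan's 1986 paper with $k=7$ \emph{and} $j=3$. Your fallback is the same move, but as you state it, it does not yield the bound. Vaughan's lemma has a second parameter $j$, the number of differencing steps. The shape of the right-hand side is governed by $j$, not by $k$; for example, $S^{7/8}(\sum_mR(m))^{1/4}=S^{1-2^{-j}}(\sum_mR(m))^{2^{1-j}}$ at $j=3$. Your description ``$T_R\ll PS+P^{k-3+\nu+\cdots}$'' gives $P^{4+\nu}$ at $k=7$, which is not a term of the stated inequality. So ``specialising $k=7$'' does not check anything. (The hypothesis $6/7\leqslant\lambda\leqslant1$ is simply $(k-1)/k\leqslant\lambda\leqslant1$ at $k=7$.)

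Your self-contained sketch does not fill the gap either. It asserts where the exponents come from without deriving them, and the proposed route would not produce them. Write $W(\alpha)=\sum_{1\leqslant|h|\leqslant H}\sum_x e(\alpha hg(x,h))$ for your inner sum.

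\begin{itemize}
\item \emph{Minor arcs.} Bounding $\int|K|^2|W|$ by $S\sup|W|$ and applying Weyl's inequality to each of the $\asymp H$ sextic sums gives at best $SHP^{1-2^{-5}+\varepsilon}$. This is not dominated by the right-hand side: take $(\sum_mR(m))^2\asymp S$ and $\nu>1/32$.
\item \emph{H\"older step.} The bound $|T'|\leqslant S^{7/8}(\sum_mR(m))^{1/4}(\int_0^1|W|^8\,\mathrm d\alpha)^{1/8}$ would need $\int_0^1|W|^8\,\mathrm d\alpha\ll P^{4+7\nu+\varepsilon}$, which you never prove. Hua's lemma for each fixed $h$, plus H\"older in $h$, gives only $H^8P^{5+\varepsilon}$, i.e.\ $P^{5/8+\nu}$ in place of $P^{1/2+7\nu/8}$.
\item \emph{Major arcs.} The claimed ``saving of $P^{1/4}$ from the average over $q$'' is unsupported.
\end{itemize}

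The term $P^{1/2+7\nu/8+\varepsilon}S^{7/8}(\sum_mR(m))^{1/4}$ is instead the signature of an elementary argument with no arc dissection. Write $\Delta_h\phi(x)=\phi(x+h)-\phi(x)$, and apply Cauchy--Schwarz three times to the weighted count itself. Each time, sum over $\ell$ and the differences already introduced, and difference in $x$. This reaches $\Delta_{h_4}\Delta_{h_3}\Delta_{h_2}\Delta_{h}x^7=hh_2h_3h_4\,c(x)$ with $c$ cubic. Each nonzero value of this has $\ll P^{\varepsilon}$ representations by the divisor bound. With $H=P^\nu$, the factors $(HS)^{1/2}(HPS)^{1/4}(HP^2S)^{1/8}(P^{\varepsilon}(\sum_mR(m))^2)^{1/8}$ multiply to exactly that term.

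So either carry out such a differencing argument, taking care over the diagonal contributions, or cite Vaughan's lemma with both parameters $k=7$ and $j=3$, as the paper does.
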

\begin{proof}
See is Lemma 4 of Vaughan \cite{Vaughan1986} with parameters $k=7$ and $j=3$.
\end{proof}

\begin{lemma}\label{lem:large-prime-values}
Let $k\geqslant4$ and $X$ be sufficiently large. Define
$\sigma_k^{-1}=\min\{2^{k-1},k(k-1)\}$, and put
\begin{equation*}
g_k(\alpha;X)=\sum_{X<p\leqslant2X}e(\alpha p^k).
\end{equation*}
Suppose that $X^{2\sigma_k/3}\leqslant Z\leqslant X^{9/20}$. Then, for every sufficiently small $\varepsilon>0$, either one has the bound
\begin{equation*}
g_k(\alpha;X)\ll X^{1-\sigma_k/3+\varepsilon},
\end{equation*}
or else there exist integers $a$ and $q$ such that
\begin{equation*}
1\leqslant q\leqslant Z, \qquad (a,q)=1, \qquad |q\alpha-a|\leqslant ZX^{-k},
\end{equation*}
in which case
\begin{equation*}
g_k(\alpha;X)
\ll \frac{X^{1+\varepsilon}}
{(q+X^k|q\alpha-a|)^{1/2}}.
\end{equation*}
\end{lemma}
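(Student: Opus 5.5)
This dichotomy is the standard one for Weyl sums over primes, going back to Kumchev's work on prime Weyl sums. In practice one cites Kumchev's Theorem 3 (or Kawada--Wooley). If I were to prove it, I would follow the standard route, in three steps.

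\textbf{Step 1: bilinear decomposition.} I would first apply Vaughan's identity, or Heath-Brown's identity, to the von Mangoldt function on $(X,2X]$. Partial summation then removes the $\log p$ weights and the contribution of prime powers. This reduces $g_k(\alpha;X)$, up to a factor $X^{\varepsilon}$, to two kinds of sums:
\begin{itemize}
\item type I sums $\sum_{m\leqslant M}a_m\sum_{\ell}e(\alpha(m\ell)^k)$ with $M\leqslant X^{1-\rho}$ for suitable $\rho$;
\item type II sums $\sum_{m\sim M}\sum_{\ell}a_mb_\ell e(\alpha(m\ell)^k)$ with $M$ in a middle range, of length about $X^{\sigma}$ to $X^{1-\sigma}$ in suitable units.
\end{itemize}
The coefficients satisfy divisor-type bounds.

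\textbf{Step 2: the two kinds of sums.} For type I sums, the inner sum over $\ell$ is a complete Weyl sum in $\ell$ of degree $k$. I would estimate it by the Weyl inequality (giving the $2^{k-1}$ saving) or by Vinogradov mean value bounds, now available in optimal form by Bourgain--Demeter--Guth and Wooley (giving the $k(k-1)$ saving). Either way one gets a saving of $X^{\sigma_k}$ unless $m^k\alpha$ has a good rational approximation. Summing over $m$, via a lemma of Baker type on simultaneous approximations, then gives one of two outcomes: either the bound $X^{1-\sigma_k/3+\varepsilon}$, or an approximation $|q\alpha-a|\leqslant ZX^{-k}$ with $q\leqslant Z$. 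For type II sums, I would apply Cauchy--Schwarz in $m$ and Weyl-difference in $\ell$. This produces sums $\sum_{\ell_1,\ell_2}\sum_m e(\alpha m^k(\ell_1^k-\ell_2^k))$, which I would treat in the same way, again obtaining the same dichotomy.

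\textbf{Step 3: the major-arc bound.} In the case where $\alpha$ has such an approximation, $g_k$ is compared with $S_k(q,a)\varphi(q)^{-1}\int e(\beta t^k)\,dt/\log t$. The target bound $X^{1+\varepsilon}(q+X^k|q\alpha-a|)^{-1/2}$ is not available from Siegel--Walfisz when $q$ is as large as $X^{9/20}$. Instead I would use Kumchev's argument: expand in Dirichlet characters, then apply large-sieve and zero-density estimates together with $S_k(q,a)\ll q^{1/2+\varepsilon}$. The upper constraint $Z\leqslant X^{9/20}$ is exactly what lets the zero-density input (Huxley type) cover this range.

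I expect Step 3, the major-arc bound with the square-root saving for $q$ up to $X^{9/20}$, to be the main obstacle. Given the paper's style, the actual proof is just the citation to Kumchev's Theorem 3 with $\sigma_k$ as defined.
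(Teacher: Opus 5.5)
Your proposal is correct and essentially matches the paper, which also proves this only by citation, namely to Lemma~2.2 of Kumchev and Wooley \cite{KumchevWooley2017}; that lemma is Kumchev's dichotomy with the Bourgain--Demeter--Guth mean value input supplying the $k(k-1)$ term, as you anticipated in Step~2. Your Step~3 is not a new obstacle: it is exactly Lemma~\ref{lem:kumchev-prime-sum} with $Q\asymp Z$, because $q+X^k|q\alpha-a|\leqslant 2Z$ and $Z\leqslant X^{9/20}$ give $Z^{1/2}X^{11/20+\varepsilon}\ll X^{1+\varepsilon}(q+X^k|q\alpha-a|)^{-1/2}$ (so $9/20=1-11/20$ is dictated by that lemma's error term), while $Z\geqslant X^{2\sigma_k/3}$ guarantees that the approximations produced by the minor-arc analysis lie inside the $Z$-arcs.
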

\begin{proof}
See Lemma 2.2 of Kumchev and Wooley \cite{KumchevWooley2017}.
\end{proof}

\begin{lemma}
\label{lem:kumchev-prime-sum}
Suppose that $P$ is sufficiently large, and define
\begin{equation*}
G_k(\alpha;P)=\sum_{P\leqslant p<2P}e(\alpha p^k).
\end{equation*}
Let $k\in\mathbb N$ and $\alpha\in\mathbb R$, and suppose that there exist $a\in\mathbb{Z}$ and $q\in\mathbb{N}$
satisfying
\begin{equation*}
1\leqslant q\leqslant Q, \qquad (a,q)=1, \qquad |q\alpha-a|<QP^{-k}
\end{equation*}
with $Q\leqslant P$. Then, for any fixed $\varepsilon>0$, one has
\begin{equation*}
G_k(\alpha;P)
\ll Q^{1/2}P^{11/20+\varepsilon}
+\frac{q^\varepsilon P(\log P)^c}
{(q+P^k|q\alpha-a|)^{1/2}},
\end{equation*}
where $c>0$ is an absolute constant and where the constant implied by ``$\ll$'' depends
at most on $k$ and $\varepsilon$.
\end{lemma}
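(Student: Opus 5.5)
This bound is Kumchev's estimate for Weyl sums over primes, and I would prove it the way that estimate is proved in the literature. Throughout, write $\beta=\alpha-a/q$.

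\emph{Decomposition.} First replace the indicator of primes by the von Mangoldt function and apply Vaughan's identity (or Heath-Brown's identity) with parameters of size about $P^{1/5}$ or $P^{1/4}$. This writes $\sum_{P\leqslant m<2P}\Lambda(m)e(\alpha m^k)$ as $O((\log P)^c)$ pieces of two kinds.

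1. Type I sums $\sum_{x\leqslant X}a_x\sum_{y}e(\alpha x^ky^k)$ with $X$ small.
2. Type II bilinear sums $\sum_{x\sim X}\sum_{y\sim P/X}a_xb_ye(\alpha x^ky^k)$, with $X$ in a middle range such as $[P^{1/4},P^{1/2}]$.

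Partial summation then removes the logarithmic weight and the prime powers.

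\emph{Type I sums.} Treat the inner sum over $y$ as a complete Weyl sum. By the standard major arc approximation $S(q,a)q^{-1}v(\beta)$ plus an error, applied to the inner sum at $x^k\alpha$, these contribute the main shape
\[
\frac{q^\varepsilon P}{(q+P^k|q\alpha-a|)^{1/2}}\qquad\text{(times logarithms)}.
\]
Here the $q^{-1/2}$ saving comes from bounds on complete sums after averaging over $x$. When the approximation fails, one uses Weyl/Vinogradov mean value input instead.

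\emph{Type II sums.} Apply Cauchy--Schwarz and expand to get
\[
\sum_{y_1,y_2}\Big|\sum_x e\big(\alpha x^k(y_1^k-y_2^k)\big)\Big|.
\]
Estimate this through a large sieve type inequality, or a mean value over the rationals near $\alpha(y_1^k-y_2^k)$. Harman's and Kumchev's treatment makes this work using only the hypothesis $|q\alpha-a|<QP^{-k}$, $Q\leqslant P$, and it yields the term $Q^{1/2}P^{11/20+\varepsilon}$. The exponent $11/20$ comes from optimizing the boundary between Type I and Type II ranges.

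\emph{Main obstacle.} The hardest step is the Type II estimate: the bound must be uniform in $k$ and give $Q^{1/2}P^{11/20}$ with no dependence on $k$ in the exponent. I would first try the exponential sum over $x$ with $y$ restricted to arithmetic progressions modulo $q$, combined with Lemma-style major arc bounds for each $\alpha(y_1^k-y_2^k)$.

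In practice, since the paper cites literature for the surrounding lemmas, I would simply invoke Theorem 3 of Kumchev's paper on Weyl sums over primes in short intervals, which is exactly this statement.
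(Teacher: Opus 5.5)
The paper gives no argument for this lemma. It quotes the bound as Theorem~2 of Kumchev's 2006 paper \emph{On Weyl sums over primes and almost primes} (Michigan Math.\ J.\ 54). Your closing move, invoking the literature, is therefore the paper's approach, but your reference is not. Kumchev's paper on Weyl sums over primes in short intervals is a different work, and as far as I can tell it is not where this statement comes from. Since the citation is the entire proof, it needs to point to the right result.

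Read as an independent proof, the sketch before that has a genuine gap: the mechanism it describes would not yield this bound.
\begin{itemize}
\item In a Type~I sum the inner variable $y$ runs over all integers. The major-arc approximation at $\alpha x^k$ therefore involves the complete sum $q'^{-1}\sum_{y=1}^{q'}e(a'y^k/q')$ with $q'=q/(q,x^k)$, which in general is only $\ll q'^{-1/k}$ (sharp for $q'=p^k$). For $(x,q)=1$ this sum equals the one at $a/q$, so averaging over $x$ gains nothing.
\item The $q^{-1/2+\varepsilon}$ in the lemma is the saving $\varphi(q)^{-1}\sum_{(b,q)=1}e(ab^k/q)\ll q^{-1/2+\varepsilon}$ for \emph{reduced} residues. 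It appears only once you work with primes in residue classes modulo $q$.
\item Likewise, a major-arc approximation of $\sum_x e(\alpha(y_1^k-y_2^k)x^k)$ in your Type~II step gives $k$-dependent savings and error terms. It cannot produce the $k$-free term $Q^{1/2}P^{11/20+\varepsilon}$.
\end{itemize}

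The missing idea is this. Because $q$ can be as large as a power of $P$, you must control primes in progressions to such large moduli, twisted by $e(\beta p^k)$ with $\beta=\alpha-a/q$. As I understand it, bounds of this shape come from Ren's method, which Kumchev refines:
\begin{itemize}
\item For $p\nmid q$, expand $e(ap^k/q)$ in Dirichlet characters modulo $q$.
\item Bound the twisted complete sums $\sum_{(b,q)=1}\chi(b)e(ab^k/q)\ll q^{1/2+\varepsilon}$.
\item This reduces the problem to $\sum_{\chi\bmod q}\bigl|\sum_{P\leqslant p<2P}\chi(p)e(\beta p^k)\bigr|$.
\item That sum is handled by Heath-Brown's identity and mean-value (large-sieve) estimates for Dirichlet polynomials over $\chi\bmod q$ and $|t|\ll T$, where $T\asymp 1+P^k|\beta|$.
\end{itemize}
Since $qT\asymp q+P^k|q\alpha-a|\ll Q$, this is the source of the factor $Q^{1/2}$. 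Moreover, $k$ enters only through $T$, which is why the exponent $11/20$ does not depend on $k$.
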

\begin{proof}
See Theorem 2 of Kumchev \cite{Kumchev2006}.
\end{proof}

\begin{lemma}\label{Brudern-lemma}
Let $2\leqslant k_1\leqslant k_2\leqslant\dots\leqslant k_s$ be natural numbers such that
\begin{equation*}
   \sum_{i=j+1}^s\frac{1}{k_i}\leqslant \frac{1}{k_j},\quad 1\leqslant j\leqslant s-1.
\end{equation*}
Then we have
\begin{equation*}
   \int_0^1\bigg|\prod_{i=1}^s f_{k_i}(\alpha)\bigg|^2\mathrm{d}\alpha\ll N^{\frac{1}{k_1}+\cdots+\frac{1}{k_s}+\varepsilon}.
\end{equation*}
\end{lemma}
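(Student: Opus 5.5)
This is Brüdern's mean value lemma for mixed powers, and I would prove it by induction on $s$ via an iterated diagonal (counting) argument. Expanding the square and using orthogonality, the integral $\int_0^1|\prod_{i=1}^s f_{k_i}(\alpha)|^2\,\mathrm d\alpha$ is bounded, up to logarithmic weights $\ll N^\varepsilon$, by the number $J_s$ of solutions of
\begin{equation*}
\sum_{i=1}^s x_i^{k_i}=\sum_{i=1}^s y_i^{k_i},\qquad x_i,y_i\leqslant P_{k_i}=N^{1/k_i}.
\end{equation*}
Primality plays no role, so it suffices to treat integer variables. The target is $J_s\ll N^{\varepsilon}\prod_i P_{k_i}$, which is the diagonal contribution. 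The hypothesis $\sum_{i>j}1/k_i\leqslant 1/k_j$ says exactly that at each stage the variable $x_j$ has range $P_{k_j}$ at least as large as the total size of the tail $\prod_{i>j}P_{k_i}$.

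I would peel off variables starting from the smallest exponent $k_1$. Write the equation as
\begin{equation*}
x_1^{k_1}-y_1^{k_1}=\sum_{i\geqslant2}\big(y_i^{k_i}-x_i^{k_i}\big)=:h.
\end{equation*}
First I count the solutions with $h=0$. These force $x_1=y_1$, giving $P_{k_1}$ choices times the number of solutions $J_{s-1}$ of the shorter system $(k_2,\dots,k_s)$. That shorter system still satisfies the hypothesis, so by induction $J_{s-1}\ll N^{\varepsilon}\prod_{i\geqslant2}P_{k_i}$.

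Next I count the solutions with $h\neq0$. Here $|h|\ll N$, and $x_1^{k_1}-y_1^{k_1}=h$ factors as $(x_1-y_1)\cdot(\text{cofactor})$. So for fixed $h$ the number of pairs $(x_1,y_1)$ is at most $\tau(h)\ll N^\varepsilon$, since $x_1-y_1=d$ divides $h$ and $d$ then determines $x_1$ through a monotone polynomial equation. This case therefore contributes $\ll N^\varepsilon\prod_{i\geqslant2}P_{k_i}^2$. The hypothesis for $j=1$ gives $\prod_{i\geqslant2}P_{k_i}=N^{\sum_{i\geqslant2}1/k_i}\leqslant N^{1/k_1}=P_{k_1}$, so this is $\ll N^\varepsilon P_{k_1}\prod_{i\geqslant2}P_{k_i}$. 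Combining the two cases closes the induction. The base case $s=1$ is trivial, since there $J_1=P_{k_1}$.

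I expect the main obstacle to be the care needed in the divisor step, not the combinatorics. When $k_1=2$ the factorisation is complete, with $x_1-y_1$ and $x_1+y_1$ determined by a divisor pair. For larger $k_1$ one uses that $d\mid h$ and that $t\mapsto (t+d)^{k_1}-t^{k_1}$ is strictly increasing for $t>0$, so each divisor yields at most one solution. One must also keep track of the logarithmic weights and the $\varepsilon$-losses through the induction; there are at most $s$ steps, so these losses stay bounded by $N^{\varepsilon}$.
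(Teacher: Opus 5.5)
Your proof is correct. The paper gives no proof of this lemma and simply cites Lemma 1 of Br\"udern; your induction on $s$ (split by whether $x_1^{k_1}-y_1^{k_1}$ vanishes, use the divisor bound when it does not, then invoke $\sum_{i\geqslant2}1/k_i\leqslant 1/k_1$ to absorb the off-diagonal count) is, as far as I recall, the standard argument behind that lemma, and it matches the divisor-counting step the paper itself uses when bounding $J_1$.
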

\begin{proof}
  See Lemma 1 of Br\"{u}dern \cite{Brudern1987I}.
\end{proof}

For the purpose of bounding the minor arcs contribution to
(\ref{eq:representation-integral}), we introduce the following mean values
\begin{equation*}
J_1=\int_0^1|f_2f_7|^2|f_5|^4\,\mathrm d\alpha,
\qquad
J_2=\int_0^1|f_2|^2|f_4|^4\,\mathrm d\alpha.
\end{equation*}
Our goal is to show that
\begin{equation}
J_1\ll P_2^{1+\varepsilon}P_7Y^2
\ll n^{517/518+\varepsilon}
\label{eq:seventh-fifth-mean-value}
\end{equation}
and
\begin{equation}
J_2\ll n^{1+\varepsilon}.
\label{eq:square-biquadrate-mean-value}
\end{equation}

For $J_2$, apart from a factor $L^6$ by using partial summation, the conclusion can be deduced by counting the number of solutions of the underlying Diophantine equation
\begin{equation*}
x_1^2-x_2^2=y_1^4+y_2^4-y_3^4-y_4^4,
\end{equation*}
with $P_2/2<x_1,x_2\leqslant P_2$ and $P_4/2<y_i\leqslant P_4$ for $1\leqslant i\leqslant4$.
By Lemma \ref{Brudern-lemma}, it is easy to see that (\ref{eq:square-biquadrate-mean-value}) holds.

To deal with $J_1$, we first estimate the number $T$ of solutions of the equation
\begin{equation*}
y_1^7-y_2^7=z_1^5+z_2^5-z_3^5-z_4^5,
\end{equation*}
subject to
\begin{equation}
P_7/2<y_i\leqslant P_7 \quad (1\leqslant i\leqslant2),
\quad \textrm{and} \quad
1\leqslant z_j\leqslant Y
\quad (1\leqslant j\leqslant4).
\label{eq:mixed-mean-ranges}
\end{equation}
We apply Lemma~\ref{lem:vaughan-specialized} with
$\lambda=\theta$ and $P=P_7/2$, and with $R(m)$ denoting the number of choices
$z_1,z_2\in[1,Y]$ such that $z_1^5+z_2^5=m$. Then the quantity
$T_R$ in Lemma~\ref{lem:vaughan-specialized} coincides with our $T$.
The equation $z_1^5+z_2^5=m$ implies
that $z_1+z_2\mid m$. Thus, by a divisor counting argument, one has
$R(m)\ll m^\varepsilon$, and hence
\begin{equation*}
\sum_mR(m)\ll Y^2, \qquad \sum_mR^2(m)\ll Y^{2+\varepsilon}.
\end{equation*}
Moreover, by noting that $6/7<\theta<1$ and the definition of $R(m)$, one has $R(m)=0$ for $2Y^5<m\ll P^{7\theta}$, and thus the supporting conditions and the range hypotheses of Lemma \ref{lem:vaughan-specialized} are satisfied.
In the notation of Lemma~\ref{lem:vaughan-specialized}, one has
\begin{equation*}
\nu=7\theta-6=\frac8{37}<\frac14.
\end{equation*}
The term $P^{3/4+\nu}S=P_7^{3/4+\nu}\sum_mR^2(m)$ is therefore dominated by
$PS=P_7\sum_mR^2(m)$, and Lemma~\ref{lem:vaughan-specialized} gives
\begin{equation*}
T\ll P_7Y^{2+\varepsilon}
+P_7^{1/2+7\nu/8+\varepsilon}Y^{9/4}.
\end{equation*}
By noting that the value $\theta=230/259$, it follows from $\nu=7\theta-6$ and $Y=P_7^{7\theta/5}$ that
\begin{equation*}
P_7^{1/2+7\nu/8}Y^{9/4}=P_7Y^2.
\end{equation*}
Consequently, we obtain
\begin{equation}\label{eq:theta-balance}
T\ll P_7Y^2n^\varepsilon \quad \textrm{with}\quad
P_7Y^2=n^{1/7+2\theta/5}=n^{129/259}.
\end{equation}
By orthogonality, we have $J_1\ll \mathcal{J}L^8$, where $\mathcal{J}$ is the number of solutions
of the equation
\begin{equation}\label{eq:mixed-diophantine-equation}
x_1^2-x_2^2=y_1^7-y_2^7+z_1^5+z_2^5-z_3^5-z_4^5
\end{equation}
in integer variabes subject to (\ref{eq:mixed-mean-ranges}) and
$P_2/2<x_1,x_2\leqslant P_2$. The number of solutions  of (\ref{eq:mixed-diophantine-equation}) with the right--hand side being zero is $O(P_2T)$. In order to count the remaining solutions, we note that there are $O(P_7^2Y^4)$ choices for $y_i,z_j$ for the right--hand side of (\ref{eq:mixed-diophantine-equation})
to be nonzero. For each such choice, by a divisor counting argument, there exist $O(P_2^\varepsilon)$ possibilities
for $x_1,x_2$. Combining the above two cases, one has
\begin{equation*}
\mathcal{J} \ll  P_2^{1+\varepsilon}P_7Y^2+P_7^{2+\varepsilon}Y^4
\ll  P_2^{1+\varepsilon}P_7Y^2,
\end{equation*}
where the last inequality follows from
\begin{equation*}
P_7Y^2=n^{129/259}=n^{1/2-1/518}<P_2.
\end{equation*}
Absorbing the logarithmic factor into $n^\varepsilon$, we obtain
(\ref{eq:seventh-fifth-mean-value}).

Now, we shall deal with the contribution of the integral over minor arcs to
(\ref{eq:representation-integral}) which arises from the set
\begin{equation}\label{eq:small-seventh-set}
\mathfrak E=\{\alpha\in[0,1]:|f_7(\alpha)|\leqslant P_7^t\}
\end{equation}
with
\begin{equation*}
t=\frac{18511}{18648}.
\end{equation*}

\begin{lemma}\label{lem:small-seventh-values}
There exists a fixed $\eta>0$ such that
\begin{equation}\label{eq:admissible-t}
\int_{\mathfrak E}|F(\alpha)|\,\mathrm d\alpha
\ll n^{\Theta-\eta}.
\end{equation}
\end{lemma}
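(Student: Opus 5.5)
The plan is to pull out part of the small size of $f_7$ on $\mathfrak E$ and then bound the rest by one H\"older inequality that uses exactly the two mean values $J_1$ and $J_2$ just established. On $\mathfrak E$ we have $|f_7(\alpha)|\leqslant P_7^t$, so writing $|f_7|=|f_7|^{1/2}\cdot|f_7|^{1/2}$ gives
\begin{equation*}
\int_{\mathfrak E}|F(\alpha)|\,\mathrm d\alpha
\leqslant P_7^{t/2}\int_0^1|f_2|^2|f_4|^3|f_5||f_7|^{1/2}\,\mathrm d\alpha .
\end{equation*}
Only half a power of $f_7$ is extracted. The remaining $|f_7|^{1/2}$ is what lets $J_1$, which contains $|f_7|^2$, enter with exponent $1/4$.

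Next, factor the integrand as
\begin{equation*}
|f_2|^2|f_4|^3|f_5||f_7|^{1/2}
=\big(|f_2|^2|f_4|^4\big)^{3/4}\big(|f_2|^2|f_7|^2|f_5|^4\big)^{1/4}.
\end{equation*}
The exponents check: for $f_2$ we get $3/2+1/2=2$, for $f_4$ we get $3$, for $f_5$ we get $1$, and for $f_7$ we get $1/2$. H\"older's inequality with exponents $4/3$ and $4$ then yields
\begin{equation*}
\int_{\mathfrak E}|F(\alpha)|\,\mathrm d\alpha\ll P_7^{t/2}J_2^{3/4}J_1^{1/4}.
\end{equation*}

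Now insert (\ref{eq:square-biquadrate-mean-value}) and (\ref{eq:seventh-fifth-mean-value}). This gives the bound $n^{E+\varepsilon}$ with
\begin{equation*}
E=\frac{t}{14}+\frac34+\frac{517}{2072}.
\end{equation*}
It remains to compare $E$ with $\Theta=\frac34+\frac{\theta}{5}+\frac17=\frac{1109}{1036}$. Cancelling the common $\frac34$, this amounts to checking
\begin{equation*}
\frac{t}{14}+\frac{517}{2072}<\frac{\theta}{5}+\frac17 ,
\end{equation*}
an exact rational computation with $t=18511/18648$ and $\theta=230/259$. Numerically the left side is about $0.32042$ and the right side about $0.32046$. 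So the inequality holds with a tiny positive gap $\eta_0$, and taking $\eta=\eta_0/2$ and $\varepsilon<\eta_0/2$ gives (\ref{eq:admissible-t}).

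The main obstacle is choosing the H\"older split. Other natural choices all lose too much: using $|f_7|\leqslant P_7^t$ fully together with a mean value of $|f_2|^2|f_5|^4$ from Lemma~\ref{Brudern-lemma}, or bounding $f_4$ trivially against $J_1^{1/4}$ and $\int|f_2|^2$. Only the split above, with $f_7$ halved and weights $3/4$ and $1/4$, beats $\Theta$. The margin is extremely thin, which is presumably why $t$ has its precise value. So the final step must be done as an exact fraction computation and not by approximation. One should also confirm that the $n^\varepsilon$ and logarithmic losses in $J_1$ and $J_2$ are absorbed by the gap, which holds since $\varepsilon$ is arbitrary.
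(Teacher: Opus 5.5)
Your proposal is correct and is essentially the paper's argument: the same H\"older split $\int_{\mathfrak E}|F|\leqslant J_1^{1/4}J_2^{3/4}\sup_{\mathfrak E}|f_7|^{1/2}$, giving the exponent $\frac{2071}{2072}+\frac{t}{14}+\varepsilon$. The gap you checked numerically is exactly $\Theta-\frac{2071}{2072}-\frac{t}{14}=\frac{1}{14}\big(\frac{147}{148}-t\big)=\frac{11}{261072}$, which matches your estimate.
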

\begin{proof}
We apply H\"older's inequality to infer that
\begin{equation*}
\int_{\mathfrak E}|F(\alpha)|\,\mathrm d\alpha
\leqslant J_1^{1/4}J_2^{3/4}
\sup_{\alpha\in\mathfrak E}|f_7(\alpha)|^{1/2}.
\end{equation*}
By (\ref{eq:seventh-fifth-mean-value}), (\ref{eq:square-biquadrate-mean-value}) and
(\ref{eq:small-seventh-set}), one has
\begin{equation*}
\int_{\mathfrak E}|F(\alpha)|\,\mathrm d\alpha
\ll n^{\frac14\cdot\frac{517}{518}+\frac34+\frac{t}{14}+\varepsilon}
= n^{2071/2072+t/14+\varepsilon}.
\end{equation*}
Since $\Theta=1109/1036$, the strict upper bound condition in
(\ref{eq:admissible-t}) is precisely derived from
\begin{equation}\label{eq:small-value-closure-margin}
\Theta-\bigg(\frac{2071}{2072}+\frac{t}{14}\bigg)
=\frac1{14}\bigg(\frac{147}{148}-t\bigg)
=\frac{11}{261072}>0.
\end{equation}
By choosing the $\varepsilon$ in the above two mean value estimates with $0<\varepsilon<11/522144$ and putting $\eta=11/261072-\varepsilon$, it is easy to see that $\eta>0$, and the desired estimate follows.
\end{proof}

In order to treat the complement of $\mathfrak E$, we apply Lemma~\ref{lem:large-prime-values}. For $k=7$, one has
$\sigma_7=1/42$ and
\begin{equation*}
1-\frac{\sigma_7}{3}=\frac{125}{126}<t.
\end{equation*}
Taking $Q=n^{1/440}$, $X=P_7/2$ and $Z=Q/128$, then for sufficiently large $n$, one has
\begin{equation*}
X^{2\sigma_7/3}=X^{1/63}\leqslant Z\leqslant X^{9/20},
\end{equation*}
which satisfies the condition of Lemma~\ref{lem:large-prime-values}. Suppose that
$\alpha\notin\mathfrak E$, and choose $\varepsilon>0$ such that
$125/126+2\varepsilon<t$. By partial summation, the first alternative case of
Lemma \ref{lem:large-prime-values} would give
\begin{equation*}
|f_7(\alpha)|\ll P_7^{125/126+2\varepsilon}<P_7^t,
\end{equation*}
which is impossible. Therefore, the second alternative case must hold. Since $ZX^{-7}=Q/n$, there exist integers $a$ and $q$ satisfying
\begin{equation*}
1\leqslant q\leqslant Z\leqslant Q, \quad (a,q)=1,\quad |q\alpha-a|\leqslant Q/n.
\end{equation*}
It follows that $[0,1]\setminus\mathfrak E\subseteq\mathfrak R$, where
$\mathfrak R$ is the union of the intervals
\begin{equation}
\mathfrak R(q,a)=
\{\alpha\in[0,1]:|q\alpha-a|\leqslant Q/n\},
\label{eq:rational-arcs}
\end{equation}
with $1\leqslant a\leqslant q, (a,q)=1$ and $1\leqslant q\leqslant Q$. For sufficiently large $n$, the intervals $\mathfrak R(q,a)$ occurring are literally pairwise disjoint.

Define a function $\Upsilon:\mathfrak R\to[0,1]$ by
\begin{equation*}
\Upsilon(\alpha)=(q+n|q\alpha-a|)^{-1},
\qquad \alpha\in\mathfrak R(q,a).
\end{equation*}
For $\alpha\in\mathfrak R(q,a)$, one has $q\leqslant Q\leqslant P_k/2$ and
$|q\alpha-a|\leqslant QP_k^{-k}<Q(P_k/2)^{-k}$ for $k=2,4,7$.
Applying Lemma~\ref{lem:kumchev-prime-sum} with $P=P_k/2$, and then by using partial summation and
$(P_k/2)^k\asymp n$, we obtain
\begin{align}\label{eq:prime-sum-pruning-bound}
           f_k(\alpha)
\ll & \,\, Q^{1/2}P_k^{11/20+\varepsilon}L +q^\varepsilon P_kL^C\Upsilon(\alpha)^{1/2}
                \nonumber \\
\ll & \,\, P_kL^C\Upsilon(\alpha)^{1/2-\varepsilon}.
\end{align}
where $C>0$ is a constant.
Hence, by using the trivial bound for $f_5$, we deduce that
\begin{equation*}
     |F(\alpha)|
\ll  P_2^2P_4^3YP_7L^{C_1}(\Upsilon(\alpha))^{3-6\varepsilon}
\ll  n^{\Theta+1}L^{C_2}(\Upsilon(\alpha))^{5/2},
\end{equation*}
where the last estimate follows by taking the same $\varepsilon$ sufficiently
small subject to $6\varepsilon\leqslant1/2$.

For $\alpha\in\mathfrak{m}\backslash\mathfrak{E}\subseteq\mathfrak{R}\backslash\mathfrak{M}$, we set
$\alpha=a/q+\beta$. Then, either one has
\begin{equation}\label{complement-case-1}
L^B<q\leqslant Q, \qquad |\beta|\leqslant\frac{Q}{qn},
\end{equation}
or else
\begin{equation}\label{complement-case-2}
q\leqslant L^B, \qquad \frac{L^B}{n}<|\beta|\leqslant\frac{Q}{qn}.
\end{equation}
If (\ref{complement-case-1}) holds, then the contribution to $\mathfrak{m}$ is
\begin{align*}
 \ll & \,\, \sum_{L^B<q\leqslant Q}\sum_{\substack{a=1\\(a,q)=1}}^q\int_{\mathfrak{R}(q,a)}
            |F(\alpha)|\mathrm{d}\alpha
                \nonumber \\
 \ll & \,\, n^{\Theta+1}L^{C_2}\sum_{L^B<q\leqslant Q}\sum_{\substack{a=1\\(a,q)=1}}^q
            \int_{-\frac{Q}{qn}}^{\frac{Q}{qn}}\frac{\mathrm{d}\beta}{(q+qn|\beta|)^{5/2}}
                 \nonumber \\
 \ll & \,\, n^{\Theta+1}L^{C_2}\sum_{L^B<q\leqslant Q}\sum_{\substack{a=1\\(a,q)=1}}^q\frac{1}{qn}
            \int_0^Q\frac{\mathrm{d}u}{(q+u)^{5/2}}
                 \nonumber \\
 \ll & \,\, n^{\Theta+1}L^{C_2}\sum_{L^B<q\leqslant Q}q^{-3/2}
            \ll n^{\Theta}L^{C_2-B/2}\ll n^{\Theta}L^{-1}.
\end{align*}
If (\ref{complement-case-2}) holds, then the contribution to $\mathfrak{m}$ is
\begin{align*}
 \ll & \,\, \sum_{q\leqslant L^B}\sum_{\substack{a=1\\(a,q)=1}}^q
            \int_{\mathfrak{R}(q,a)\backslash\mathfrak{M}(q,a)}|F(\alpha)|\mathrm{d}\alpha
                \nonumber \\
 \ll & \,\, n^{\Theta+1}L^{C_2}\sum_{q\leqslant L^B}\sum_{\substack{a=1\\(a,q)=1}}^q
            \int_{\frac{L^B}{n}<|\beta|\leqslant\frac{Q}{qn}}\frac{\mathrm{d}\beta}{(q+qn|\beta|)^{5/2}}
                 \nonumber \\
 \ll & \,\, n^{\Theta+1}L^{C_2}\sum_{q\leqslant L^B}\sum_{\substack{a=1\\(a,q)=1}}^q\frac{1}{qn}
            \int_{qL^B}^Q\frac{\mathrm{d}u}{u^{5/2}}
                 \nonumber \\
 \ll & \,\, n^{\Theta+1}L^{C_2}\sum_{q\leqslant L^B}\sum_{\substack{a=1\\(a,q)=1}}^q
            \frac{1}{q^{5/2}L^{3B/2}n}
                 \nonumber \\
 \ll & \,\, n^{\Theta}L^{C_2-3B/2}\sum_{q\leqslant L^B}q^{-3/2}\ll n^{\Theta}L^{C_2-3B/2}\ll n^{\Theta}L^{-1}.
\end{align*}
Combining the above two cases, we deduce that
\begin{equation}\label{minor-RM-upper}
\int_{\mathfrak R\setminus\mathfrak M}|F(\alpha)|\,\mathrm d\alpha
\ll n^\Theta L^{-1}.
\end{equation}
Accordingly, by Lemma \ref{lem:small-seventh-values} and  (\ref{minor-RM-upper}), we finally derive that
\begin{align}\label{minor-final-upper}
           \int_{\mathfrak m}|F(\alpha)|\,\mathrm d\alpha
\ll & \,\, \int_{\mathfrak m\cap\mathfrak{E}}|F(\alpha)|\mathrm d\alpha
           +\int_{\mathfrak m\backslash\mathfrak{E}}|F(\alpha)|\mathrm d\alpha
                 \nonumber \\
\ll & \,\, n^{\Theta-\eta}+n^\Theta L^{-1}
\ll  n^\Theta L^{-1}.
\end{align}
Based on Lemma \ref{lem:major-arc-lower-bound} and (\ref{minor-final-upper}), we know that the two bounds in
(\ref{eq:circle-method-targets}) hold. This completes the proof of Theorem \ref{thm:waring-goldbach-2244457}.

\end{document}